\newtheorem{thm}{Theorem}
\numberwithin{thm}{section}
\newtheorem{lem}[thm]{Lemma}
\newtheorem{cor}[thm]{Corollary}
\newtheorem{exam}[thm]{Example}
\newtheorem{rema}[thm]{Remark}
\newtheorem{defi}[thm]{Definition}
\newtheorem*{que2}{Question}
\begin{document}
\begin{center}
\huge{On non-existence of full exceptional collections on some relative flags}\\[1cm]
\end{center}

\begin{center}

\large{Sa$\mathrm{\check{s}}$a Novakovi$\mathrm{\acute{c}}$}\\[0,4cm]
{\small December 2019}\\[0,3cm]
\end{center}

\noindent{\small \textbf{Abstract}. 
In this paper we show that certain relative flags cannot have full exceptional collections. We also prove that some of these flags are categorical representable in dimension zero if and only if they admit a full exceptional collection. As a consequence, these flags are categorical representable in dimension zero if and only if they have $k$-rational points if and only if they are $k$-rational.	Moreover, we calculate the categorical representability dimension for the flags under consideration.\\


	\section{Introduction}
In \cite{NO2Z} the author proved among others that non-trivial twisted flags of type $A_n$ or $C_n$ cannot have full exceptional collections. It is therefore natural to ask whether twisted forms of relative flags or relative flags over bases (which do not have full exceptional collections) can have full exceptional collections. In the last years it became clear that semiorthogonal decompositions detect birational properties of a given variety (see for instance \cite{ABZ} and references therein). In this context, we follow the ideas presented in \cite{NO2Z} and relate the existence of full exceptional collections on certain relative flags $X$ to the rationality of $X$. The first result of the present paper is the following theorem which is certainly known to the experts but, to our best knowledge, stated nowhere. 

\begin{thm}
	Let $Z$ be a non-trivial twisted flag of type $A_n$ or $C_n$ over a field $k$ and $\pi\colon X\rightarrow Z$ a flat and proper morphism with $X$ a smooth projective variety over $k$. Assume there is a semiorthogonal decomposition $D^b(X)=\langle \pi^*D^b(Z)\otimes \mathcal{E}_1,...,\pi^*D^b(Z)\otimes \mathcal{E}_r\rangle$ with $D^b(Z)$ being equivalent to $\pi^*D^b(Z)\otimes \mathcal{E}_i$ via $\pi^*(-)\otimes \mathcal{E}_i$. Then $X$ cannot have a full exceptional collection.
\end{thm}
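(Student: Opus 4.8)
The plan is to argue by contradiction, using the determinant of the Euler form on $K_0$ as an invariant that detects non-split central simple algebras. Suppose $X$ carries a full exceptional collection of length $N$. Then $K_0(X)$ is a free $\mathbb{Z}$-module of rank $N$ and, in the associated basis, the Gram matrix of the Euler pairing $\chi(E,F)=\sum_l(-1)^l\dim_k\mathrm{Hom}(E,F[l])$ is triangular with $1$'s on the diagonal; hence $\det\chi=1$. On the other hand, for a twisted flag $Z$ of type $A_n,B_n,C_n$ or $D_n$ with $n\neq 4$ the category $D^b(Z)$ admits a semiorthogonal decomposition $D^b(Z)=\langle D^b(B_1),\dots,D^b(B_m)\rangle$ into derived categories of central simple $k$-algebras $B_1,\dots,B_m$ — this is the derived-categorical enhancement of Panin's computation of the $K$-theory of twisted flags, and it is precisely the structure underlying the non-existence statement of \cite{NO2}. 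Inserting this into the hypothesis on $D^b(X)$, and using that each functor $\pi^*(-)\otimes\mathcal{E}_i$ is a $k$-linear equivalence onto its image, we obtain a semiorthogonal decomposition $D^b(X)=\langle D^b(C_1),\dots,D^b(C_{rm})\rangle$ in which the list $C_1,\dots,C_{rm}$ consists of $r$ copies of $B_1,\dots,B_m$; in particular $N=rm$.

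Next I would recompute $\det\chi$ in the basis of $K_0(X)\cong\bigoplus_t K_0(D^b(C_t))$ coming from this second decomposition. For a central simple $k$-algebra $B$ the group $K_0(D^b(B))$ is free of rank one, generated by the class of a simple module $S$; since $S$ is projective one has $\chi(S,S)=\dim_k\mathrm{End}_B(S)=\mathrm{ind}(B)^2$. Semiorthogonality forces the Gram matrix of $\chi$ on $K_0(X)$ in this basis to be block upper triangular, hence triangular with diagonal entries $\mathrm{ind}(C_t)^2$, so $\det\chi=\prod_t\mathrm{ind}(C_t)^2=\big(\prod_j\mathrm{ind}(B_j)^2\big)^r$. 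The determinant of the Gram matrix of a bilinear form on a free finite rank $\mathbb{Z}$-module does not depend on the chosen basis — a change of basis multiplies it by the square of a unit of $\mathbb{Z}$, i.e.\ by $1$ — so comparison with the first paragraph yields $\big(\prod_j\mathrm{ind}(B_j)^2\big)^r=1$, whence $\mathrm{ind}(B_j)=1$ for every $j$. Thus all the $B_j$ are split.

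But then $D^b(B_j)\simeq D^b(k)$ for each $j$, so $D^b(Z)=\langle D^b(k),\dots,D^b(k)\rangle$ and $Z$ itself has a full exceptional collection. This contradicts the theorem of \cite{NO2} that a non-trivial twisted flag of type $A_n,B_n,C_n$ or $D_n$ with $n\neq 4$ has no full exceptional collection. Therefore $X$ has no full exceptional collection.

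I expect the genuine work to lie not in the determinant bookkeeping but in the input from the second sentence of the first paragraph: one must ensure that the semiorthogonal decomposition of $D^b(Z)$ into derived categories of central simple algebras is available uniformly across the listed types — assembling it from the known decompositions for generalized Severi--Brauer varieties and flags, twisted quadrics, and twisted symplectic and orthogonal Grassmannians, and recording why $D_4$ must be excluded — and that tensoring such a piece by $\mathcal{E}_i$ and pulling back along $\pi$ really produces a $k$-linear triangulated category equivalent to $D^b(B_j)$, so that $K_0$ and the Euler pairing transport as claimed. The remaining points — freeness of $K_0(X)$ given the exceptional collection, the value $\mathrm{ind}(B)^2$ of $\chi$ on a simple module over a central simple algebra, the vanishing of the off-diagonal blocks by semiorthogonality, and basis-independence of $\det\chi$ — are routine.
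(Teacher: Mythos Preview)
Your argument is correct and reaches the same contradiction as the paper, but by a genuinely different and more elementary mechanism. The paper passes through noncommutative motives: from the semiorthogonal decomposition of $D^b(X)$ and uniqueness of dg enhancements it obtains $U(\mathrm{perf}_{dg}(X))\simeq U(\mathrm{perf}_{dg}(Z))^{\oplus r}$, invokes Tabuada's Theorem~4.1 to write $U(\mathrm{perf}_{dg}(Z))\simeq\bigoplus_i U(A_i)$ with the $A_i$ central simple, and then applies Theorem~4.2 to conclude from $U(\mathrm{perf}_{dg}(X))\simeq U(k)^{\oplus N}$ that every $A_i$ splits, contradicting non-triviality of $Z$ via \cite{NO2}. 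You replace the passage through $\mathrm{Hmo}_0$ and dg enhancements with a direct determinant computation of the Euler form on $K_0$, which is pleasantly concrete and uses only classical tools. The trade-off is precisely the one you flag in your last paragraph: Tabuada's motive-level decomposition of $\mathrm{perf}_{dg}(Z)$ is available uniformly for all twisted flags by a single citation, whereas the semiorthogonal decomposition $D^b(Z)=\langle D^b(B_1),\dots,D^b(B_m)\rangle$ at the triangulated level --- which you need for the block-triangular shape of your Gram matrix --- must be assembled from the literature type by type and is not recorded in one place for all classical types. Where that input is in hand your argument is shorter and lighter; the paper's route buys uniformity at the price of heavier machinery.
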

Examples are given by Corollaries 5.1 and 5.2. Note that although the variety $X$ from Theorem 1.1 cannot have a full exceptional collection, it can have a tilting bundle (see \cite{NO3Z} for examples). 
Closely related to the problem of the existence of full exceptional collections is the question whether the considered variety admits a $k$-rational point or is $k$-rational. A potential measure for rationality was introduced by Bernardara and Bolognesi \cite{BBZ} with the notion of categorical representability. We use the definition given in \cite{AB1Z}. A $k$-linear triangulated category $\mathcal{T}$ is said to be \emph{representable in dimension $m$} if there is a semiorthogonal decomposition (see Section 3 for the definition) $\mathcal{T}=\langle \mathcal{A}_1,...,\mathcal{A}_n\rangle$ and for each $i=1,...,n$ there exists a smooth projective connected variety $Y_i$ with $\mathrm{dim}(Y_i)\leq m$, such that $\mathcal{A}_i$ is equivalent to an admissible subcategory of $D^b(Y_i)$. We use the following notation
\begin{eqnarray*}
	\mathrm{rdim}(\mathcal{T}):=\mathrm{min}\{m\mid \mathcal{T}\  \textnormal{is representable in dimension m}\},
\end{eqnarray*}
whenever such a finite $m$ exists. Let $X$ be a smooth projective $k$-variety. One says $X$ is \emph{representable in dimension} $m$ if $D^b(X)$ is representable in dimension $m$. We will use the following notations:
\begin{eqnarray*}
	\mathrm{rdim}(X):=\mathrm{rdim}(D^b(X)),\thickspace \mathrm{rcodim}(X):= \mathrm{dim}(X)-\mathrm{rdim}(X).
\end{eqnarray*} 
We recall the following question which was formulated in \cite{MBTZ}:
\begin{que2}
	Let $X$ be a smooth projective variety over $k$ of dimension at least 2. Suppose $X$ is $k$-rational. Do we have $\mathrm{rcodim}(X)\geq 2$ ?
\end{que2} 
There are several results suggesting that this question has a positive answer, see \cite{MBTZ}, \cite{ABZ}, \cite{BBZ}, \cite{NO2Z} and references therein.
Now let $G=\mathrm{PSO}_n$ be over $k$ with $n$ even and $\mathrm{char}(k)\neq 2$. Given a 1-cocycle $\gamma\colon \mathrm{Gal}(k^s|k)\rightarrow \mathrm{PSO}_n(k^s)$ we get a twisted form of a quadric ${_\gamma}Q$ and a central simple $k$-algebra $(A,\sigma)$ of degree $n$ with involution associated to $\gamma$ (see \cite{KNUZ}). Note that ${_\gamma}Q$ is isomorphic to the involution variety $\mathrm{IV}(A,\sigma)$ of Section 2. We assume that the degree of $A$ is even. This is no restriction (see Section 2 for an explanation). For any splitting field $L$ of $A$, the variety ${_\gamma}Q\otimes_k L$ is isomorphic to a smooth quadric in $\mathbb{P}^{n-1}_L$. 
Note that the (generalized) Brauer--Severi varieties are obtained as quotients of $G=\mathrm{PGL}_n$ by a certain parabolic subgroup $P$ and by twisting with a 1-cocycle $\gamma\colon\mathrm{Gal}(k^s|k)\rightarrow \mathrm{PGL}_n(k^s)$.
In Section 6 we show the following:
\begin{thm}
	Let $Z$ be one of the following varieties:
	\begin{itemize}
		\item[(\textbf{i})]	a Brauer--Severi variety over an arbitrary field $k$.
		\item[(\textbf{ii})] a generalized Brauer--Severi variety over a field $k$ of characteristic zero.
		\item[(\textbf{iii})] a twisted quadric associated to a central simple algebra $(A,\sigma)$ with involution of orthogonal type having trivial discriminant $\delta(A,\sigma)$. 
		\item[(\textbf{iv})] a twisted quadric associated to a central simple $\mathbb{R}$-algebra $(A,\sigma)$ with involution of orthogonal type.
	\end{itemize} 
	Let $\pi\colon X\rightarrow Z$ be a flat and proper morphism where $X$ is a smooth projective variety over $k$. Assume there is a semiorthogonal decomposition $D^b(X)=\langle \pi^*D^b(Z)\otimes \mathcal{E}_1,...,\pi^*D^b(Z)\otimes \mathcal{E}_r\rangle$ as in Theorem 1.1. Then $\mathrm{rdim}(X)=0$ if and only if the 1-cocycle defining $Z$ is trivial.
\end{thm}

\begin{cor}
	Under the assumptions of Theorem 1.2, $\mathrm{rdim}(X)=0$ if and only if $X$ admits a full exceptional collection.
\end{cor}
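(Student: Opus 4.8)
The plan is to read the equivalence off from Theorem 1.2, the only non-formal input being the classical fact that split rational homogeneous varieties carry full exceptional collections. I keep fixed throughout the data of the statement: $\pi\colon X\to Z$ flat and proper with $X$ smooth projective, together with the semiorthogonal decomposition $D^b(X)=\langle\pi^*D^b(Z)\otimes\mathcal{E}_1,\dots,\pi^*D^b(Z)\otimes\mathcal{E}_r\rangle$ in which each component is equivalent to $D^b(Z)$ via $\pi^*(-)\otimes\mathcal{E}_i$.

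One implication is formal. If $X$ admits a full exceptional collection $D^b(X)=\langle E_1,\dots,E_N\rangle$, then each block $\langle E_i\rangle$ is equivalent to $D^b(\mathrm{Spec}\,k)$, so this very collection exhibits $D^b(X)$ as categorically representable in dimension zero. Hence only the converse requires an argument.

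Assume then that $D^b(X)$ is categorically representable in dimension zero. By Theorem 1.2 the $1$-cocycle defining $Z$ is trivial, so that $Z$ is the split form: a projective space, a Grassmannian, or a split smooth quadric, according to the case at hand. In each of these situations $D^b(Z)$ carries a full exceptional collection $D^b(Z)=\langle T_1,\dots,T_m\rangle$ by classical results (Beilinson, Kapranov, and Kuznetsov for quadrics). Since $\pi^*(-)\otimes\mathcal{E}_i\colon D^b(Z)\to\pi^*D^b(Z)\otimes\mathcal{E}_i$ is an equivalence onto an admissible subcategory, the twisted pullbacks $\pi^*T_1\otimes\mathcal{E}_i,\dots,\pi^*T_m\otimes\mathcal{E}_i$ form a full exceptional collection of the $i$-th component. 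Concatenating these lists in the order prescribed by the given decomposition of $D^b(X)$, and using the elementary fact that a semiorthogonal decomposition whose every component possesses a full exceptional collection admits the concatenation of these collections as a full exceptional collection, one obtains a full exceptional collection of $D^b(X)$. This proves the corollary.

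The only substantial ingredient is Theorem 1.2; the rest consists of the standard exceptional collections on split homogeneous spaces and the routine refinement of a semiorthogonal decomposition through full exceptional collections of its pieces. The point worth being careful about is conceptual rather than computational: categorical representability in dimension zero a priori only provides components equivalent to $D^b$ of a possibly non-split finite \'etale $k$-algebra, so in general it is strictly weaker than the existence of a full exceptional collection; what closes the gap here is exactly that Theorem 1.2 identifies the obstruction with non-triviality of the defining cocycle, which is simultaneously the obstruction preventing $Z$ --- and therefore $X$ --- from being split and admitting a full exceptional collection.
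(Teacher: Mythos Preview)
Your proof is correct and follows essentially the same route as the paper: for the nontrivial direction you invoke Theorem~1.2 to conclude that the defining cocycle is trivial, then use the classical full exceptional collections on the split form of $Z$ (the paper cites \cite{BLU} here) together with the concatenation argument (the paper packages this as Lemma~6.1) to produce a full exceptional collection on $X$; the easy direction is handled identically (the paper cites \cite{AB}, Lemma~1.19 for it).
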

\begin{cor}
	Let $Z$ be a Brauer--Severi variety over an arbitrary field $k$ or a smooth twisted quadric as in Theorem 1.2 and $\mathcal{E}$ a vector bundle on $Z$. Let $X$ be $\mathbb{P}_Z(\mathcal{E})$ or $\mathrm{Grass}_Z(d,\mathcal{E})$. Then $\mathrm{rdim}(X)=0$ if and only if $X$ admits a $k$-rational point if and only if $X$ is $k$-rational. In particular, if $X$ is $k$-rational, then $\mathrm{rcodim}(X)\geq 2$ 
\end{cor}
\noindent
Notice that Corollary 1.4 is not true if $Z$ is a generalized Brauer--Severi variety. Indeed, as \cite{NO2Z}, Remark 6.7 shows, there are non-trivial generalized Brauer--Severi varieties $Z$ admitting a rational point although $\mathrm{rdim}(Z)=1$.

The number $\mathrm{rdim}$ is closely related to the \emph{motivic categorical dimension} which is introduced in \cite{MBZ} for arbitrary fields. In $\mathrm{char}(k)=0$, motivic categorical dimension of a smooth projective variety $X$ is defined to be the smallest $d$ such that $\mu([X])$ lies in $PT_d(k)$. Here $PT(k)$ denotes the Grothendieck ring of dg categories (see \cite{BLLZ}), $PT_d(k)\subset PT(k)$ the additive subgroup generated by the smallest saturated monoid containing classes of pretriangulated dg categories of categorical dimension at least $d$ and $\mu\colon K_0(Var(k))\rightarrow PT(k)$ the motivic measure defined by Bondal, Larsen and Lunts in \cite{BLLZ}. If one denotes the motivic categorical dimension of $D^b(X)$ by $\mathrm{mcd}(X)$, one has $\mathrm{mcd}(X)\leq \mathrm{rdim}(X)$ (see \cite{MBZ}). And indeed, results from \cite{MBZ} suggest that motivic categorical dimension can be used to define a birational invariant. A natural problem is then to calculate the numbers $\mathrm{mcd}(X)$  and $\mathrm{rdim}(X)$ and find conditions for which $X$ equality $\mathrm{mcd}(X)=\mathrm{rdim}(X)$ holds. In this context we show the following result.
\begin{thm}
	Let $Z$ be one of the following varieties:
	\begin{itemize}
		\item[(\textbf{i})]	a Brauer--Severi variety over an arbitrary field $k$.
		\item[(\textbf{ii})] a generalized Brauer--Severi variety over a field $k$ of characteristic zero.
		\item[(\textbf{iii})] a twisted quadric associated to a central simple algebra $(A,\sigma)$ with involution of orthogonal type having trivial discriminant $\delta(A,\sigma)$. 
		\item[(\textbf{iv})] a twisted quadric associated to a central simple $\mathbb{R}$-algebra $(A,\sigma)$ with involution of orthogonal type.
	\end{itemize} 
	Let $X$ be $\mathbb{P}_Z(\mathcal{E})$ or $\mathrm{Grass}_Z(d,\mathcal{E})$ and denote by $A$ the central simple algebra associated to $Z$. Then $\mathrm{rdim}(X)\leq \mathrm{ind}(A)-1$. Moreover, if $\mathrm{ind}(A)\leq 3$, then $\mathrm{rdim}(X)=\mathrm{ind}(A)-1$. In particular $\mathrm{mcd}(X)\leq \mathrm{ind}(A)-1$.
\end{thm}

\section{Generalized Brauer--Severi varieties and twisted quadrics}
A Brauer--Severi variety of dimension $n$ is a scheme $X$ of finite type over $k$ such that $X\otimes_k L\simeq \mathbb{P}^n$ for a finite field extension $k\subset L$. A field extension $k\subset L$ for which $X\otimes_k L\simeq \mathbb{P}^n$ is called \emph{splitting field} of $X$. Clearly, $k^s$ and $\bar{k}$ are splitting fields for any Brauer--Severi variety. In fact, every Brauer--Severi variety always splits over a finite Galois extension of $k$. It follows from descent theory that $X$ is projective, integral and smooth over $k$. Via non-commutative Galois cohomology, Brauer--Severi varieties of dimension $n$ are in one-to-one correspondence with central simple algebras $A$ of degree $n+1$. For details and proofs on all mentioned facts we refer to \cite{ARZ} and \cite{GSZ}.

To a central simple $k$-algebra $A$ one can also associate twisted forms of Grassmannians. Let $A$ be of degree $n$ and $1\leq d\leq n$. Consider the subset of $\mathrm{Grass}_k(d\cdot n, A)$ consisting of those subspaces of $A$ that are left ideals $I$ of dimension $d\cdot n$. This subset can be given the structure of a projective variety which turns out to be a generalized Brauer--Severi variety. It is denoted by $\mathrm{BS}(d,A)$. After base change to some splitting field $E$ of $A$ the variety $\mathrm{BS}(d,A)$ becomes isomorphic to $\mathrm{Grass}_E(d,n)$. If $d=1$ the generalized Brauer--Severi variety is the Brauer--Severi variety associated to $A$. Note that $\mathrm{BS}(d,A)$ is a Fano variety. For details see \cite{BLZ}. 

Finally, to a central simple algebra $A$ of degree $n$ with involution $\sigma$ of the first kind over a field $k$ of $\mathrm{char}(k)\neq 2$ one can associate the \emph{involution variety} $\mathrm{IV}(A,\sigma)$. This variety can be described as the variety of $n$-dimensional right ideals $I$ of $A$ such that $\sigma(I)\cdot I=0$. If $A$ is split so $(A,\sigma)\simeq (M_n(k), q^*)$, where $q^*$ is the adjoint involution defined by a quadratic form $q$ one has $\mathrm{IV}(A,\sigma)\simeq V(q)\subset \mathbb{P}^{n-1}_k$. Here $V(q)$ is the quadric determined by $q$. By construction such an involution variety  $\mathrm{IV}(A,\sigma)$ becomes a quadric in $\mathbb{P}^{n-1}_L$ after base change to some splitting field $L$ of $A$. In this way the involution variety is a twisted form of a smooth quadric as defined before. Recall from \cite{DTZ} that a splitting field $L$ of $A$ is called \emph{isotropically} if $(A,\sigma)\otimes_k L\simeq (M_n(L), q^*)$ with $q$ an isotropic quadratic form over $L$. Although the degree of $A$ is arbitrary, (when $\mathrm{char}(k)\neq 2$), the case where degree of $A$ is odd does not give anything new, since central simple algebras of odd degree with involution of the first kind are split (see \cite{KNUZ}, Corollary 2.8). For details on the construction and further properties on involution varieties and the corresponding algebras we refer to \cite{DTZ}.

\section{Exceptional collections and semiorthogonal decompositions} 
Let $\mathcal{D}$ be a triangulated category and $\mathcal{C}$ a triangulated subcategory. The subcategory $\mathcal{C}$ is called \emph{thick} if it is closed under isomorphisms and direct summands. For a subset $A$ of objects of $\mathcal{D}$ we denote by $\langle A\rangle$ the smallest full thick subcategory of $\mathcal{D}$ containing the elements of $A$. 
Furthermore, we define $A^{\perp}$ to be the subcategory of $\mathcal{D}$ consisting of all objects $M$ such that $\mathrm{Hom}_{\mathcal{D}}(E[i],M)=0$ for all $i\in \mathbb{Z}$ and all elements $E$ of $A$. We say that $A$ \emph{generates} $\mathcal{D}$ if $A^{\perp}=0$. Now assume $\mathcal{D}$ admits arbitrary direct sums. An object $B$ is called \emph{compact} if $\mathrm{Hom}_{\mathcal{D}}(B,-)$ commutes with direct sums. Denoting by $\mathcal{D}^c$ the subcategory of compact objects we say that $\mathcal{D}$ is \emph{compactly generated} if the objects of $\mathcal{D}^c$ generate $\mathcal{D}$. One has the following important theorem (see \cite{BVZ}, Theorem 2.1.2).
\begin{thm}
	Let $\mathcal{D}$ be a compactly generated triangulated category. Then a set of objects $A\subset \mathcal{D}^c$ generates $\mathcal{D}$ if and only if $\langle A\rangle=\mathcal{D}^c$.  
\end{thm}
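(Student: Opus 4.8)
The plan is to prove the two implications separately; all the substance lies in the direction ``$A$ generates $\mathcal{D}$'' $\Longrightarrow$ ``$\langle A\rangle=\mathcal{D}^c$''. First I would isolate a formal observation: given $M$ with $\mathrm{Hom}_{\mathcal{D}}(E[i],M)=0$ for all $E\in A$ and all $i\in\mathbb{Z}$, the full subcategory $\mathcal{E}_M$ of objects $E'$ satisfying $\mathrm{Hom}_{\mathcal{D}}(E'[i],M)=0$ for all $i$ is closed under shifts, cones, direct summands, and arbitrary coproducts --- the last because $\mathrm{Hom}_{\mathcal{D}}(\bigoplus_j E_j,M)=\prod_j\mathrm{Hom}_{\mathcal{D}}(E_j,M)$. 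Since $\mathcal{E}_M$ contains $A$, it contains both the thick subcategory $\langle A\rangle$ and the smallest localizing subcategory $\mathrm{Loc}(A)$ generated by $A$; hence $A^{\perp}=\langle A\rangle^{\perp}=\mathrm{Loc}(A)^{\perp}$. The easy implication follows at once: if $\langle A\rangle=\mathcal{D}^c$, then $A^{\perp}=\langle A\rangle^{\perp}=(\mathcal{D}^c)^{\perp}=0$ because $\mathcal{D}$ is compactly generated, i.e. $A$ generates $\mathcal{D}$.

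For the converse, assume $A^{\perp}=0$. The inclusion $\langle A\rangle\subseteq\mathcal{D}^c$ is automatic, since finite extensions, shifts, and direct summands of compact objects are compact; so only $\mathcal{D}^c\subseteq\langle A\rangle$ needs an argument, which I would obtain in two steps. The first step is formal: $\mathrm{Loc}(A)=\mathcal{D}$. Indeed, coproducts in $\mathrm{Loc}(A)$ are computed in $\mathcal{D}$, so each object of $A$ remains compact in $\mathrm{Loc}(A)$, and $\mathrm{Loc}(A)$ is a compactly generated triangulated category; by Brown representability the inclusion $\mathrm{Loc}(A)\hookrightarrow\mathcal{D}$ admits a right adjoint, so every $M\in\mathcal{D}$ fits into a triangle $N\to M\to M'$ with $N\in\mathrm{Loc}(A)$ and $M'\in\mathrm{Loc}(A)^{\perp}=A^{\perp}=0$; thus $M\cong N\in\mathrm{Loc}(A)$.

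The second step is the genuinely non-formal one, and the point where I expect essentially all of the difficulty to be concentrated: for $\mathcal{D}=\mathrm{Loc}(A)$ with $A$ a set of compact objects one has $\mathcal{D}^c=\langle A\rangle$. This is the content of \cite{BV}, Theorem~2.1.2 (in a form going back to Neeman and to Thomason--Trobaugh): one realizes an arbitrary object of $\mathcal{D}$ as a homotopy colimit of a sequence $G_0\to G_1\to\cdots$ built cellularly from shifts of objects of $A$, then for a compact $C$ uses compactness to push the comparison map to a finite stage and an idempotent-splitting argument --- relying on idempotent completeness of $\mathcal{D}^c$ (B\"okstedt--Neeman, valid since $\mathcal{D}$ has countable coproducts, and harmless here since $\langle A\rangle$ is by definition closed under direct summands) --- to conclude $C\in\langle A\rangle$. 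Rather than reproduce this delicate argument I would cite \cite{BV}; together with the first step and the trivial inclusion this yields $\mathcal{D}^c=\langle A\rangle$ and completes the proof.
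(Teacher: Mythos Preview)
Your proposal is a correct outline of the standard Neeman/Thomason--Trobaugh argument, but there is nothing to compare it against: the paper does not prove this theorem at all. It is quoted verbatim as \cite{BV}, Theorem~2.1.2, with no argument given. In that sense your write-up already contains strictly more than the paper --- you supply the easy direction, the reduction $\mathrm{Loc}(A)=\mathcal{D}$ via Brown representability, and a sketch of the cellular-tower/idempotent-splitting step --- while the paper simply imports the statement from the literature. One minor remark on presentation: your ``second step'' is essentially the theorem itself (after the reduction in the first step), so invoking \cite{BV} there amounts to citing the result you set out to prove; this is fine given that the paper does the same, but if you intend to present this as an independent proof you would want to actually carry out the homotopy-colimit argument you describe rather than defer to the reference.
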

For a smooth projective scheme $X$ over $k$, we denote by $D(\mathrm{Qcoh}(X))$ the derived category of quasicoherent sheaves on $X$. The bounded derived category of coherent sheaves is denoted by $D^b(X)$. Note that $D(\mathrm{Qcoh}(X))$ is compactly generated with compact objects being all of $D^b(X)$. For details on generating see \cite{BVZ}.

\begin{defi}
	\textnormal{Let $A$ be a division algebra over $k$, not necessarily central. An object $\mathcal{E}\in D^b(X)$ is called \emph{w-exceptional} if $\mathrm{End}(\mathcal{E})=A$ and $\mathrm{Hom}(\mathcal{E},\mathcal{E}[r])=0$ for $r\neq 0$. If $A=k$ the object is called \emph{exceptional}. If $A$ is a separable $k$-algebra, the object $\mathcal{E}$ is called \emph{separable-exceptional}. } 
\end{defi}
\begin{defi}
	\textnormal{A totally ordered set $\{\mathcal{E}_1,...,\mathcal{E}_n\}$ of w-exceptional (resp. separable-exceptional) objects on $X$ is called an \emph{w-exceptional collection} (resp. \emph{separable-exceptional collection}) if $\mathrm{Hom}(\mathcal{E}_i,\mathcal{E}_j[r])=0$ for all integers $r$ whenever $i>j$. An w-exceptional (resp. separable-exceptional) collection is \emph{full} if $\langle\{\mathcal{E}_1,...,\mathcal{E}_n\}\rangle=D^b(X)$ and \emph{strong} if $\mathrm{Hom}(\mathcal{E}_i,\mathcal{E}_j[r])=0$ whenever $r\neq 0$. If the set $\{\mathcal{E}_1,...,\mathcal{E}_n\}$ consists of exceptional objects it is called \emph{exceptional collection}.}
\end{defi}
Notice that the direct sum of objects forming a full strong w-exceptional (resp. separable-exceptional) collection is a tilting object. 
\begin{exam}
	\textnormal{Let $\mathbb{P}^n$ be the projective space and consider the ordered collection of invertible sheaves $\{\mathcal{O}_{\mathbb{P}^n}, \mathcal{O}_{\mathbb{P}^n}(1),...,\mathcal{O}_{\mathbb{P}^n}(n)\}$. In \cite{BEZ} Beilinson showed that this is a full strong exceptional collection. }
\end{exam}
\begin{exam}
	\textnormal{Let $X=\mathbb{P}^1\times\mathbb{P}^1$. Then $\{\mathcal{O}_X,\mathcal{O}_X(1,0), \mathcal{O}_X(0,1), \mathcal{O}_X(1,1)\}$ is a full strong exceptional collection on $X$. We use the notion $\mathcal{O}_X(i,j)$ for $\mathcal{O}(i)\boxtimes\mathcal{O}(j)$.}
\end{exam}
A generalization of the notion of a full w-exceptional collection is that of a semiorthogonal decomposition of $D^b(X)$. Recall that a full triangulated subcategory $\mathcal{D}$ of $D^b(X)$ is called \emph{admissible} if the inclusion $\mathcal{D}\hookrightarrow D^b(X)$ has a left and right adjoint functor. 
\begin{defi}
	\textnormal{Let $X$ be a smooth projective variety over $k$. A sequence $\mathcal{D}_1,...,\mathcal{D}_n$ of full triangulated subcategories of $D^b(X)$ is called \emph{semiorthogonal} if all $\mathcal{D}_i\subset D^b(X)$ are admissible and $\mathcal{D}_j\subset \mathcal{D}_i^{\perp}=\{\mathcal{F}\in D^b(X)\mid \mathrm{Hom}(\mathcal{G},\mathcal{F})=0$, $\forall$ $ \mathcal{G}\in\mathcal{D}_i\}$ for $i>j$. Such a sequence defines a \emph{semiorthogonal decomposition} of $D^b(X)$ if the smallest full thick subcategory containing all $\mathcal{D}_i$ equals $D^b(X)$.}
\end{defi}
\noindent
For a semiorthogonal decomposition we write $D^b(X)=\langle \mathcal{D}_1,...,\mathcal{D}_n\rangle$.
\begin{exam}
	\textnormal{Let $\mathcal{E}_1,...,\mathcal{E}_n$ be a full w-exceptional collection on $X$. It is easy to verify that by setting $\mathcal{D}_i=\langle\mathcal{E}_i\rangle$ one gets a semiorthogonal decomposition $D^b(X)=\langle \mathcal{D}_1,...,\mathcal{D}_n\rangle$.}
\end{exam}
\noindent
For a wonderful and comprehensive overview of the theory on semiorthogonal decompositions and its relevance in algebraic geometry we refer to \cite{KUZ}. 



\section{Recollections on noncommutative motives}
We refer to the book \cite{GTAZ} (alternatively see \cite{TTZ} and \cite{MTZ} for a survey on noncommutative motives). Let $\mathcal{A}$ be a small dg category. The homotopy category $H^0(\mathcal{A})$ has the same objects as $\mathcal{A}$ and as morphisms $H^0(\mathrm{Hom}_{\mathcal{A}}(x,y))$. A source of examples is provided by schemes since the derived category of perfect complexes $\mathrm{perf}(X)$ of any quasi-compact quasi-seperated scheme $X$ admits a canonical dg enhancement $\mathrm{perf}_{dg}(X)$ (for details see \cite{KELZ}). Note that for smooth projective $k$-schemes $X$ one has $D^b(X)=\mathrm{perf}(X)$. Denote by $\textbf{dgcat}$ the category of small dg categories. The \emph{opposite} dg category $\mathcal{A}^{op}$ has the same objects as $\mathcal{A}$ and $\mathrm{Hom}_{\mathcal{A}^{op}}(x,y):=\mathrm{Hom}_{\mathcal{A}}(y,x)$. A \emph{right $\mathcal{A}$-module} is a dg functor $\mathcal{A}^{op}\rightarrow C_{dg}(k)$ with values in the dg category $C_{dg}(k)$ of complexes of $k$-vector spaces. We write $C(\mathcal{A})$ for the category of right $\mathcal{A}$-modules. Recall form \cite{KELZ} that the \emph{derived category} $D(\mathcal{A})$ of $\mathcal{A}$ is the localization of $C(\mathcal{A})$  with respect to quasi-isomorphisms. A dg functor $F\colon \mathcal{A}\rightarrow \mathcal{B}$ is called \emph{derived Morita equivalence} if the restriction of scalars functor $D(\mathcal{B})\rightarrow D(\mathcal{A})$ is an equivalence. The \emph{tensor product} $\mathcal{A}\otimes \mathcal{B}$ of two dg categories is defined as follows: the set of objects is the cartesian product of the sets of objects in $\mathcal{A}$ and $\mathcal{B}$ and $\mathrm{Hom}_{\mathcal{A}\otimes \mathcal{B}}((x,w),(y,z)):=\mathrm{Hom}_{\mathcal{A}}(x,y)\otimes\mathrm{Hom}_{\mathcal{B}}(w,z)$ (see \cite{KELZ}). Given two dg categories $\mathcal{A}$ and $\mathcal{B}$, let $\mathrm{rep}(\mathcal{A},\mathcal{B})$ be the full triangulated subcategory of $D(\mathcal{A}^{op}\otimes \mathcal{B})$ consisting of those $\mathcal{A}-\mathcal{B}$-bimodules $M$ such that $M(x,-)$ is a compact object of $D(\mathcal{B})$ for every object $x\in \mathcal{A}$. 
The category $\textbf{dgcat}$ of all (small) dg categories and dg functors carries a Quillen model structure whose weak equivalences are Morita equivalences. Let us denote by $\mathrm{Hmo}$ the homotopy category hence obtained and by $\mathrm{Hmo}_0$ its additivization. Now to any small dg category $\mathcal{A}$ one can associate functorially its \emph{noncommutative motive} $U(\mathcal{A})$ which takes values in $\mathrm{Hmo}_0$. This functor $U\colon \textbf{dgcat}\rightarrow \mathrm{Hmo}_0$ is proved to be the \emph{universal additive invariant} (see \cite{TA1Z}). Recall that an additive invariant is any functor $E\colon \textbf{dgcat}$ $\rightarrow \mathcal{D}$ taking values in an additive category $\mathcal{D}$ such that
\begin{itemize}
	\item[(\textbf{i})] it sends derived Morita equivalences to isomorphisms,\\
	
	\item[(\textbf{ii})] for any pre-triangulated dg category $\mathcal{A}$ admitting full pre-triangulated dg subcategories $\mathcal{B}$ and $\mathcal{C}$ such that $H^0(\mathcal{A})=\langle H^0(\mathcal{B}), H^0(\mathcal{C})\rangle$ is a semiorthogonal decomposition, the morphism $E(\mathcal{B})\oplus E(\mathcal{C})\rightarrow E(\mathcal{A})$ induced by the inclusions is an isomorphism.
\end{itemize}
Now let $G$ split simply connected semi-simple algebraic group over the field $k$ and $P$ a parabolic subgroup. We denote by $\widetilde{G}$ and $\widetilde{P}$ their universal covers. For the center $\widetilde{Z}\subset \widetilde{G}$ let $\mathrm{Ch}:=\mathrm{Hom}(\widetilde{Z},\mathbb{G}_m)$ be the character group. Furthermore, let $R(\widetilde{G})$ and $R(\widetilde{P})$ be the associated representation rings. Recall from \cite{PAZ},\S2 that there exits a finite free $\mathrm{Ch}$-homogeneous basis of $R(\widetilde{P})$ over $R(\widetilde{G})$. Moreover, to a 1-cocycle $\gamma\colon \mathrm{Gal}(k^s|k)\rightarrow G(k^s)$ one has the \emph{Tit's map} (see \cite{PAZ}, \S3 or \cite{KNUZ}, p.377) $\beta_{\gamma}\colon \mathrm{Ch}\rightarrow \mathrm{Br}(k)$ which is a group homomorphism and assigns to each character $\chi\in \mathrm{Ch}$ a central simple algebra $A_{\chi,\gamma}\in \mathrm{Br}(k)$, called \emph{Tit's algebra}. If $\rho_1,...,\rho_n$ is the $\mathrm{Ch}$-homogeneous $R(\widetilde{G})$ basis of $R(\widetilde{P})$ we write $\chi(i)$ for the character such that $\rho_i\in R^{\chi(i)}(\widetilde{P})$ (see \cite{PAZ}, \cite{KNUZ} and \cite{MPWZ} for details). Under this notation one has the following theorem:
\begin{thm}[\cite{TA1Z}, Theorem 2.1 (i)]
	Let $G$, $P$ and $\gamma$ be as above and $E\colon dgcat\rightarrow D$ an additive invariant. Then every $\mathrm{Ch}$-homogeneous basis $\rho_1,...,\rho_n$ of $R(\widetilde{P})$ over $R(\widetilde{G})$ give rise to an isomorphism
	\begin{eqnarray*}
		\bigoplus^n_{i=1}E(A_{\chi(i),\gamma})\stackrel{\sim}\longrightarrow E(\mathrm{perf}_{dg}({_\gamma}X)), 
	\end{eqnarray*}
	where $A_{\chi(i),\gamma}$ are the Tit's central simple algebras associated to $\rho_i$ via $\beta_{\gamma}\colon \mathrm{Ch}\rightarrow \mathrm{Br}(k)$.
\end{thm}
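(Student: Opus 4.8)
The plan is to first establish a semiorthogonal decomposition of $\mathrm{perf}_{dg}({_\gamma}X)$ whose blocks are the module categories of the Tits algebras, and then to feed it into the two defining properties of an additive invariant.

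Over the separable closure one has ${_\gamma}X\otimes_k k^s\simeq(G/P)\otimes_k k^s$, and it is classical that the split flag variety $G/P$ admits a semiorthogonal decomposition $\mathrm{perf}_{dg}(G/P\otimes k^s)=\langle\mathcal{A}_1,\dots,\mathcal{A}_n\rangle$ in which the $i$-th block $\mathcal{A}_i$ is generated by the homogeneous vector bundle $\mathcal{V}_{\rho_i}$ attached to $\rho_i$ and is derived Morita equivalent to $\mathrm{perf}_{dg}(k^s)$; at the level of $K$-theory this is precisely Panin's computation \cite{PA}.

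Next I would carry out a blockwise Galois descent. The ordered collection of blocks is intrinsically attached to the $\mathrm{Ch}$-homogeneous $R(\widetilde{G})$-basis $\rho_1,\dots,\rho_n$ of $R(\widetilde{P})$, which is defined over $k$; since the Hom-complexes of $\mathrm{perf}_{dg}(G/P\otimes k^s)$ are $\mathrm{Gal}(k^s|k)$-equivariant, the $\gamma$-twisted Galois action fixes each block and preserves the semiorthogonality among them, so the decomposition descends to $\mathrm{perf}_{dg}({_\gamma}X)=\langle\mathcal{B}_1,\dots,\mathcal{B}_n\rangle$. What fails to descend is the individual generator $\mathcal{V}_{\rho_i}$: its descent datum is obstructed by a class in $\mathrm{Br}(k)$, and the content of the Tits construction (\cite{PA}, \S3; \cite{KNU}, p.~377; \cite{MPW}) is exactly that this obstruction equals $\beta_{\gamma}(\chi(i))=[A_{\chi(i),\gamma}]$. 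Hence $\mathcal{B}_i$ is the dg category of $\beta_{\gamma}(\chi(i))$-twisted perfect complexes on $\mathrm{Spec}(k)$, which by the standard equivalence between twisted sheaves and modules over a representing central simple algebra is derived Morita equivalent to $\mathrm{perf}_{dg}(A_{\chi(i),\gamma})$. The resulting decomposition $\mathrm{perf}_{dg}({_\gamma}X)=\langle\mathrm{perf}_{dg}(A_{\chi(1),\gamma}),\dots,\mathrm{perf}_{dg}(A_{\chi(n),\gamma})\rangle$ is the dg refinement of Panin's $K$-theoretic isomorphism.

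Feeding this into $E$ is now formal: property (ii), applied inductively along the decomposition, gives a canonical isomorphism $\bigoplus_{i=1}^n E(\mathcal{B}_i)\to E(\mathrm{perf}_{dg}({_\gamma}X))$ induced by the inclusions, while property (i) identifies $E(\mathcal{B}_i)$ with $E(\mathrm{perf}_{dg}(A_{\chi(i),\gamma}))=E(A_{\chi(i),\gamma})$; composing the two yields the asserted isomorphism. The main obstacle is the descent step of the previous paragraph: one must check that the split decomposition is defined over $k$ and stays blockwise admissible (the adjoints being inherited from the Galois-equivariant mutation functors of the exceptional collection on $G/P\otimes k^s$, so that property (ii) of $E$ is applicable), and, above all, one must identify the descent obstruction of the $i$-th block with the Tits algebra $A_{\chi(i),\gamma}$. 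This last point is exactly where Panin's analysis of the homogeneous $R(\widetilde{G})$-basis of $R(\widetilde{P})$ together with the Tits homomorphism $\beta_{\gamma}\colon\mathrm{Ch}\to\mathrm{Br}(k)$ is indispensable; everything else is bookkeeping with the properties of $E$.
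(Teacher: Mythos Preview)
The paper does not prove this theorem; it is quoted verbatim from \cite{TA1}, Theorem~2.1(i), and used as a black box. So there is no ``paper's own proof'' to compare against.

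Your sketch is a faithful outline of how the result is actually obtained in \cite{TA1}: one upgrades Panin's $K$-theoretic decomposition to a semiorthogonal decomposition of $\mathrm{perf}_{dg}({_\gamma}X)$ with blocks derived Morita equivalent to $\mathrm{perf}_{dg}(A_{\chi(i),\gamma})$, and then applies properties (i) and (ii) of an additive invariant. The only point I would flag is that your phrase ``the $\gamma$-twisted Galois action fixes each block'' is a bit too quick: the blocks are not literally Galois-stable subcategories that one descends by \'etale descent of triangulated categories; rather, one constructs the decomposition directly over $k$ (via the $\gamma$-twisted homogeneous bundles, which are sheaves of $A_{\chi(i),\gamma}$-modules on ${_\gamma}X$), and the identification of each block with $\mathrm{perf}_{dg}(A_{\chi(i),\gamma})$ is what the Tits map computes. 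This is a matter of packaging rather than a genuine gap, but if you were to write a full proof you should work directly with the twisted bundles over $k$ rather than appealing to an informal descent of the split exceptional collection.
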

\begin{thm}[\cite{TA1Z}, Theorem 3.3]
	Let $G$, $P$ and $\gamma$ as in Theorem 4.1. Then $\bigoplus^n_{i=1}U(k)\simeq U(\mathrm{perf}_{dg}({_\gamma}X))$ if and only if the Brauer classes $[A_{\chi(i),\gamma}]$ are trivial.
\end{thm}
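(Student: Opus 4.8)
The plan is to deduce the statement from Theorem 4.1 together with a structural fact about the noncommutative motive of a central simple algebra inside $\mathrm{Hmo}_0$. First I would apply Theorem 4.1 to the universal additive invariant $E=U$ itself, which yields a canonical isomorphism $\bigoplus_{i=1}^n U(A_{\chi(i),\gamma})\simeq U(\mathrm{perf}_{dg}({_\gamma}X))$. Hence the statement is equivalent to the purely algebraic assertion that $\bigoplus_{i=1}^n U(A_{\chi(i),\gamma})\simeq U(k)^{\oplus n}$ in $\mathrm{Hmo}_0$ if and only if each Tits algebra $A_{\chi(i),\gamma}$ is split. The ``if'' direction is immediate: a split central simple $k$-algebra is a matrix algebra $M_m(k)$, which is derived Morita equivalent to $k$, so $U(A_{\chi(i),\gamma})\simeq U(k)$ term by term and one takes direct sums.

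The heart of the proof is the following lemma, which I would establish next: for a central simple $k$-algebra $A$ one has $U(A)\simeq U(k)$ in $\mathrm{Hmo}_0$ if and only if $[A]=0$ in $\mathrm{Br}(k)$. Using the description of morphisms recalled in Section 4, $\mathrm{Hom}_{\mathrm{Hmo}_0}(U(k),U(A))\simeq K_0(\mathrm{rep}(k,A))\simeq K_0(A)\simeq\mathbb{Z}\cdot[S]$ and $\mathrm{Hom}_{\mathrm{Hmo}_0}(U(A),U(k))\simeq K_0(A^{\mathrm{op}})\simeq\mathbb{Z}\cdot[S']$, where $S$ and $S'$ are the simple right, resp.\ left, $A$-modules; the composition $\mathrm{Hom}(U(k),U(A))\times\mathrm{Hom}(U(A),U(k))\to\mathrm{Hom}(U(k),U(k))=\mathbb{Z}$ is the pairing $(a[S],b[S'])\mapsto ab\cdot\dim_k(S\otimes_A S')=ab\cdot\mathrm{ind}(A)^2$. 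An isomorphism $U(A)\simeq U(k)$ therefore produces integers $a,b$ with $ab\cdot\mathrm{ind}(A)^2=1$, forcing $\mathrm{ind}(A)=1$; the converse is the ``if'' direction above.

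It then remains to pass from an abstract isomorphism $\bigoplus_{i=1}^n U(A_i)\simeq U(k)^{\oplus n}$ (writing $A_i:=A_{\chi(i),\gamma}$) to $U(A_i)\simeq U(k)$ for each $i$, and this cancellation is the step I expect to be the main obstacle. I would exploit two features. First, $U$ is symmetric monoidal with unit $U(k)$ and $A_i\otimes_k A_i^{\mathrm{op}}\simeq\mathrm{End}_k(A_i)$ is split, so each $U(A_i)$ is $\otimes$-invertible with inverse $U(A_i^{\mathrm{op}})$; hence tensoring with $U(A_i)$ is an autoequivalence of $\mathrm{Hmo}_0$, giving $\mathrm{End}_{\mathrm{Hmo}_0}(U(A_i))\simeq\mathrm{End}_{\mathrm{Hmo}_0}(U(k))=K_0(k)=\mathbb{Z}$ and $U(A_i)\neq0$. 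Second, $U(A_i)$ is a direct summand of $U(k)^{\oplus n}$, hence the image of an idempotent of $\mathrm{End}_{\mathrm{Hmo}_0}(U(k)^{\oplus n})=M_n(\mathbb{Z})$; since every finitely generated projective $\mathbb{Z}$-module is free, such an idempotent is $\mathrm{GL}_n(\mathbb{Z})$-conjugate to a diagonal idempotent, so $U(A_i)\simeq U(k)^{\oplus r_i}$ for some $r_i\ge0$. Comparing endomorphism rings, $M_{r_i}(\mathbb{Z})\simeq\mathbb{Z}$ forces $r_i=1$ (it is not $0$, since $U(A_i)\neq0$, nor $\ge2$). Thus $U(A_i)\simeq U(k)$ for every $i$, and the lemma gives $[A_i]=0$. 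The delicate point is exactly this cancellation; it works because motives of central simple algebras are $\otimes$-invertible — which fails for arbitrary dg categories — and because idempotent matrices over the principal ideal domain $\mathbb{Z}$ are diagonalisable.
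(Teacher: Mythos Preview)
The paper supplies no proof of this statement: Theorem 4.2 is cited directly from \cite{TA1}, Theorem 3.3, and is invoked in Section 5 only as an external input. There is therefore nothing in the present paper to compare your argument against.

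As for correctness, your outline is sound. The reduction via Theorem 4.1 with $E=U$ is immediate, and your key lemma---that $U(A)\simeq U(k)$ in $\mathrm{Hmo}_0$ forces $\mathrm{ind}(A)=1$---is established by the pairing computation you give: for $A\simeq M_m(D)$ with $D$ division of degree $d$, one has $A\simeq S^{\oplus m}$ as a right module, whence $\dim_k(S\otimes_A S')=\dim_k(S')/m=md^2/m=d^2=\mathrm{ind}(A)^2$, so an isomorphism would produce integers $a,b$ with $ab\,d^2=1$. The cancellation step also goes through: each $U(A_i)$ is a direct summand of $U(k)^{\oplus n}$, hence the image of an idempotent in $\mathrm{End}(U(k)^{\oplus n})\simeq M_n(\mathbb{Z})$; since finitely generated projective $\mathbb{Z}$-modules are free, that idempotent is $\mathrm{GL}_n(\mathbb{Z})$-conjugate to a diagonal one, giving $U(A_i)\simeq U(k)^{\oplus r_i}$, and then $\otimes$-invertibility of $U(A_i)$ forces $\mathrm{End}(U(A_i))\simeq\mathbb{Z}$, hence $r_i=1$. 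The one point worth making explicit is that the symmetric monoidal structure on $\mathrm{Hmo}_0$ you rely on for invertibility is not part of what Section 4 recalls; it is true that $U$ is symmetric monoidal and that $U(A_i)\otimes U(A_i^{\mathrm{op}})\simeq U(k)$ (since $A_i\otimes_k A_i^{\mathrm{op}}$ is Morita equivalent to $k$), but you should cite this separately rather than treat it as given by the paper.
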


\section{Proof of Theorem 1.1}
\begin{proof}
	From the assumption that there is a semiorthogonal decomposition 
	\begin{eqnarray*}
		D^b(X)=\langle \pi^*D^b(Z)\otimes \mathcal{E}_1,...,\pi^*D^b(Z)\otimes \mathcal{E}_r\rangle 
	\end{eqnarray*} we obtain from \cite{KU1Z}, Proposition 4.10 that there are pretriangulated dg categories $\mathcal{T}_1,...,\mathcal{T}_r$ with $H^0(\mathcal{T}_i)=\pi^*D^b(Z)\otimes \mathcal{E}_i$. As $D^b(Z)$ is equivalent to $\pi^*D^b(Z)\otimes \mathcal{E}_i$ and since $D^b(Z)$ has a unique dg enhancement according to \cite{LUZ} we conclude
	\begin{eqnarray*}
		U(\mathrm{perf}_{dg}(X))\simeq U(\mathrm{perf}_{dg}(Z))\oplus...\oplus U(\mathrm{perf}_{dg}(Z)).
	\end{eqnarray*}
	According to Theorem 4.1 and \cite{KNUZ}, p.378, one has $U(\mathrm{perf}_{dg}(Z))=\bigoplus U(A_i)$ for some central simple $k$-algebras $A_i$. Assuming the existence of a full exceptional collection on $X$ we obtain
	\begin{eqnarray*}
		U(\mathrm{perf}_{dg}(X))&\simeq &U(k)\oplus...\oplus U(k)\\
		&\simeq & (\bigoplus_i U(A_i))\oplus...\oplus (\bigoplus_i U(A_i)).
	\end{eqnarray*}
	Then Theorem 4.2 implies that all $A_i$ must split. Now see \cite{NO2Z}, Proposition 5.8 to conclude that the 1-cocycle $\gamma$ defining the twisted flag $Z$ must be trivial too. This contradicts the assumption that $Z$ is a non-trivial twisted flag. 
\end{proof}

\begin{cor}
	Let $Z$ be as in Theorem 1.1 and $\mathcal{E}$ a vector bundle on $Z$. Then $\mathbb{P}_Z(\mathcal{E})$ cannot have a full exceptional collection.
\end{cor}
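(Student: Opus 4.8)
The plan is to derive the statement directly from Theorem 1.1 by taking $X=\mathbb{P}_Z(\mathcal{E})$ together with the structure morphism $\pi\colon \mathbb{P}_Z(\mathcal{E})\rightarrow Z$. First I would record that the basic hypotheses of Theorem 1.1 are automatic in this situation: since $Z$ is smooth and projective over $k$ and $\pi$ is a projective bundle, $X=\mathbb{P}_Z(\mathcal{E})$ is again smooth and projective over $k$, and $\pi$ is flat and proper. So the only real task is to exhibit a semiorthogonal decomposition of $D^b(X)$ of exactly the shape appearing in Theorem 1.1.

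For this I would invoke Orlov's projective bundle formula (see \cite{KU}). Writing $r=\mathrm{rank}(\mathcal{E})$ and $\mathcal{O}_{\pi}(1)$ for the relative hyperplane line bundle on $X=\mathbb{P}_Z(\mathcal{E})$, one has a semiorthogonal decomposition
\[
D^b(\mathbb{P}_Z(\mathcal{E}))=\langle \pi^*D^b(Z),\ \pi^*D^b(Z)\otimes\mathcal{O}_{\pi}(1),\ \ldots,\ \pi^*D^b(Z)\otimes\mathcal{O}_{\pi}(r-1)\rangle .
\]
Setting $\mathcal{E}_i:=\mathcal{O}_{\pi}(i-1)$ for $i=1,\ldots,r$ (the ordering being irrelevant for what follows), this is precisely a decomposition $D^b(X)=\langle \pi^*D^b(Z)\otimes\mathcal{E}_1,\ldots,\pi^*D^b(Z)\otimes\mathcal{E}_r\rangle$. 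Moreover $\pi^*\colon D^b(Z)\rightarrow D^b(X)$ is fully faithful, since $R\pi_*\mathcal{O}_X=\mathcal{O}_Z$, and tensoring by the line bundle $\mathcal{E}_i$ is an autoequivalence of $D^b(X)$; hence $\pi^*(-)\otimes\mathcal{E}_i$ restricts to an equivalence $D^b(Z)\stackrel{\sim}{\longrightarrow}\pi^*D^b(Z)\otimes\mathcal{E}_i$, which is the remaining requirement of Theorem 1.1.

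With all the hypotheses in place, Theorem 1.1 applies and shows that $X=\mathbb{P}_Z(\mathcal{E})$ admits no full exceptional collection. I do not anticipate any genuine difficulty here: the whole content is the observation that, for a projective bundle attached to an honest vector bundle $\mathcal{E}$ on $Z$, Orlov's decomposition has exactly the structure -- copies of $D^b(Z)$ glued along line-bundle twists -- for which Theorem 1.1 was designed. The only points needing a line of care are that the relative twists $\mathcal{O}_{\pi}(j)$ are honest line bundles on $X$ (true because $\mathcal{E}$ is locally free on $Z$, so no Brauer obstruction appears) and that $\pi^*$ is fully faithful; both are part of the standard projective bundle theorem.
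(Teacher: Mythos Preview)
Your proposal is correct and follows essentially the same route as the paper: invoke Orlov's projective bundle decomposition $D^b(\mathbb{P}_Z(\mathcal{E}))=\langle \pi^*D^b(Z),\ldots,\pi^*D^b(Z)\otimes\mathcal{O}_{\pi}(r-1)\rangle$, note that each component is equivalent to $D^b(Z)$ via $\pi^*(-)\otimes\mathcal{O}_{\pi}(i)$, and then apply Theorem~1.1. Your write-up is in fact slightly more careful than the paper's, spelling out why $\pi$ is flat and proper and why $\pi^*(-)\otimes\mathcal{O}_{\pi}(i)$ is an equivalence.
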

\begin{proof}
	Let $r$ be the rank of $\mathcal{E}$. Recall from \cite{OZ} that one has a semiorthogonal decomposition 
	\begin{eqnarray}
	D^b(\mathbb{P}_Z(\mathcal{E}))\simeq \langle \pi^*D^b(Z)\otimes \mathcal{O}_{\mathcal{E}},...,\pi^*D^b(Z)\otimes\mathcal{O}_{\mathcal{E}}(r-1) \rangle.
	\end{eqnarray}
	Note that this semiorthogonal decomposition also exists over arbitrary base fields $k$ (see for instance \cite{HUYZ}, p.184). It is easy to see that the triangulated category $D^b(Z)$ is equivalent to $\pi^* D^b(Z)\otimes\mathcal{O}_{\mathcal{E}}(i)$ via $\pi^*(-)\otimes\mathcal{O}_{\mathcal{E}}(i)$. Now Theorem 1.1 yields the assertion.
\end{proof}

\begin{cor}
	Let $Z$ be as in Theorem 1.1 and assume the base field $k$ is of characteristic zero. Furthermore, let $\mathcal{E}$ be a vector bundle on $Z$. Then $\mathrm{Grass}_Z(d,\mathcal{E})$ cannot have a full exceptional collection.
\end{cor}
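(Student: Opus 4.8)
The plan is to imitate the proof of Corollary 5.3, replacing Orlov's projective-bundle decomposition by the relative version of Kapranov's exceptional collection on Grassmannians; the hypothesis $\mathrm{char}(k)=0$ is used precisely to have this decomposition in the clean form below. Write $n$ for the rank of the vector bundle $\mathcal{E}$, let $\pi\colon \mathrm{Grass}_Z(d,\mathcal{E})\rightarrow Z$ be the structure morphism and $\mathcal{U}\subset \pi^*\mathcal{E}$ the tautological rank-$d$ subbundle. Since $Z$ is smooth projective over $k$, the Grassmann bundle $X:=\mathrm{Grass}_Z(d,\mathcal{E})$ is smooth projective over $k$ and $\pi$ is smooth and proper, in particular flat, so the standing hypotheses of Theorem 1.1 on the morphism are met. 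For a Young diagram $\alpha$ contained in the $d\times(n-d)$ rectangle let $\Sigma^{\alpha}\mathcal{U}^{\vee}$ denote the corresponding Schur bundle, and fix an admissible ordering $\alpha_1,\ldots,\alpha_N$ of these $N=\binom{n}{d}$ diagrams. The first step is to recall Kapranov's theorem in its relative form: over an arbitrary base there is a semiorthogonal decomposition
\begin{eqnarray*}
D^b(\mathrm{Grass}_Z(d,\mathcal{E}))\simeq \langle\,\pi^*D^b(Z)\otimes\Sigma^{\alpha_1}\mathcal{U}^{\vee},\ \ldots,\ \pi^*D^b(Z)\otimes\Sigma^{\alpha_N}\mathcal{U}^{\vee}\,\rangle.
\end{eqnarray*}
Over $\mathrm{Spec}(k)$ this is Kapranov's original result; the relativization over an arbitrary base follows from the standard Borel--Weil--Bott computations together with cohomology and base change, and since $\mathcal{U}$ is defined globally on $X$ one may also check it by passing to a finite splitting field $L$ of $Z$, where $X\otimes_k L$ becomes an ordinary Grassmann bundle, and descending.

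The second step is to verify that each component $\pi^*D^b(Z)\otimes\Sigma^{\alpha_i}\mathcal{U}^{\vee}$ is equivalent to $D^b(Z)$ via $\pi^*(-)\otimes\Sigma^{\alpha_i}\mathcal{U}^{\vee}$, i.e. that this functor is fully faithful even though $\Sigma^{\alpha_i}\mathcal{U}^{\vee}$ has rank greater than $1$ in general. By the projection formula and adjunction, for $F=\Sigma^{\alpha_i}\mathcal{U}^{\vee}$ and $A,B\in D^b(Z)$ one has $\mathrm{Hom}(\pi^*A\otimes F,\pi^*B\otimes F)\simeq \mathrm{Hom}(A,B\otimes R\pi_*(F\otimes F^{\vee}))$, so it suffices to show $R\pi_*(F\otimes F^{\vee})\simeq\mathcal{O}_Z$. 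On each geometric fibre the bundle $\Sigma^{\alpha_i}\mathcal{U}^{\vee}$ is an exceptional object of $D^b(\mathrm{Grass}(d,n))$ --- it is one of the building blocks of Kapranov's exceptional collection --- so $H^0$ of $F\otimes F^{\vee}$ is one-dimensional and its higher cohomology vanishes fibrewise; cohomology and base change then yield $R\pi_*(F\otimes F^{\vee})\simeq\mathcal{O}_Z$. Hence $\pi^*(-)\otimes\Sigma^{\alpha_i}\mathcal{U}^{\vee}$ is fully faithful and therefore an equivalence onto its essential image $\pi^*D^b(Z)\otimes\Sigma^{\alpha_i}\mathcal{U}^{\vee}$.

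Combining the two steps, the decomposition above together with the equivalences $D^b(Z)\simeq\pi^*D^b(Z)\otimes\Sigma^{\alpha_i}\mathcal{U}^{\vee}$ puts us exactly in the situation of Theorem 1.1 with $X=\mathrm{Grass}_Z(d,\mathcal{E})$, $r=\binom{n}{d}$ and $\mathcal{E}_i=\Sigma^{\alpha_i}\mathcal{U}^{\vee}$. Since $Z$ is a non-trivial twisted flag of one of the listed types, Theorem 1.1 applies and shows that $X$ cannot carry a full exceptional collection, which is the assertion.

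The only point that genuinely requires care --- and the reason for the characteristic-zero hypothesis --- is the availability of the relative Kapranov decomposition in the displayed form: in positive characteristic the cohomology of Schur bundles on Grassmannians is no longer governed by Borel--Weil--Bott, so neither the clean list of $\binom{n}{d}$ components each equivalent to $D^b(Z)$ nor the fibrewise exceptionality used in the second step is available in that generality. Once the decomposition is in hand, the argument is a purely formal application of Theorem 1.1, exactly as in Corollary 5.3.
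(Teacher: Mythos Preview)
Your proof is correct and follows essentially the same route as the paper: set up the relative Kapranov semiorthogonal decomposition of $D^b(\mathrm{Grass}_Z(d,\mathcal{E}))$ indexed by Young diagrams in the $d\times(n-d)$ box, verify via adjunction and the projection formula that each block is equivalent to $D^b(Z)$, and then invoke Theorem~1.1. The only cosmetic differences are that the paper works with $\Sigma^{\lambda}(\mathcal{R})$ rather than $\Sigma^{\alpha}\mathcal{U}^{\vee}$, cites Baek \cite{BA} for the decomposition, and appeals directly to the relative Borel--Weil--Bott theorem \cite{DLY} for the computation of $R\pi_*(F\otimes F^{\vee})$ where you instead phrase it as fibrewise exceptionality plus cohomology and base change.
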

\begin{proof}
	Let $r+1$ be the rank of $\mathcal{E}$ and denote by $\mathcal{R}$ the tautological subbundle of rank $d$ in $\pi^*(\mathcal{E})$. Moreover, let $P$ be the set of partitions $\lambda=(\lambda_1,...,\lambda_d)$ with $0\leq \lambda_d\leq...\leq \lambda_1\leq r+1-d$. One can choose a total order $\prec$ on $P$ such that $\lambda\prec \mu$ means that the Young diagram of $\lambda$ is not contained in that of $\mu$. 
	Recall from \cite{OZ} (alternatively see \cite{BAZ}) that one has a semiorthogonal decomposition
	\begin{eqnarray*}
		D^b(\mathrm{Grass}_Z(d,\mathcal{E}))=\langle ...\pi^*D^b(Z)\otimes \Sigma^{\lambda}(\mathcal{R}),...,\pi^*D^b(Z)\otimes \Sigma^{\mu}(\mathcal{R}),...\rangle
	\end{eqnarray*} where $\lambda\prec \mu$. 
	It is easy to see that $D^b(Z)$ is equivalent to $\pi^*D^b(Z)\otimes \Sigma^{\lambda}(\mathcal{R})$ via $\pi^*(-)\otimes\Sigma^{\lambda}(\mathcal{R})$. Indeed, this follows from adjunction of $\pi^*$ and $\pi_*$, projection formula (see \cite{HUYZ}) and the relative version of the Borel--Weil--Bott Theorem (see \cite{DLYZ}, Theorem 5.1). Then Theorem 1.1 yields the assertion.
\end{proof}
\section{Proof of Theorem 1.2}

\begin{lem}
	Let $\pi\colon X\rightarrow Z$ be a flat proper morphism between smooth projective $k$-varieties. Assume the existence of a semiorthogonal decomposition $D^b(X)=\langle \pi^*D^b(Z)\otimes \mathcal{E}_1,...,\pi^*D^b(Z)\otimes \mathcal{E}_r\rangle$ with $D^b(Z)$ being equivalent to $\pi^*D^b(Z)\otimes \mathcal{E}_i$ via $\pi^*(-)\otimes \mathcal{E}_i$. Assume furthermore that $\{\mathcal{A}_1,...,\mathcal{A}_m\}$ is a full exceptional collection for $Z$. Then the ordered set
	\begin{eqnarray*}
		S=\{\pi^*(\mathcal{A}_1)\otimes\mathcal{E}_1,...,\pi^*(\mathcal{A}_m)\otimes \mathcal{E}_1,...,\pi^*(\mathcal{A}_1)\otimes\mathcal{E}_r,...,\pi^*(\mathcal{A}_m)\otimes\mathcal{E}_r\}
	\end{eqnarray*}
	is a full exceptional collection for $X$.
\end{lem}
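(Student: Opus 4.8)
The plan is to reduce the assertion to an entirely formal fact about semiorthogonal decompositions whose components carry full exceptional collections, and then to transport the given collection on $Z$ along the equivalences $\pi^*(-)\otimes\mathcal{E}_i$.

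First I would isolate the following principle. Suppose $D^b(X)=\langle \mathcal{D}_1,\dots,\mathcal{D}_r\rangle$ is a semiorthogonal decomposition and that each $\mathcal{D}_i$ admits a full exceptional collection $(E_{i,1},\dots,E_{i,m_i})$. Then the concatenation, ordered so that the whole $\mathcal{D}_1$-block precedes the $\mathcal{D}_2$-block, and so on, and ordered inside each block as the given collection, is a full exceptional collection for $X$. There are three points to check. \emph{Exceptionality}: each $E_{i,j}$ is exceptional already in $D^b(X)$, since $\mathcal{D}_i\hookrightarrow D^b(X)$ is full and hence $\mathrm{Hom}_{D^b(X)}(E_{i,j},E_{i,j}[s])=\mathrm{Hom}_{\mathcal{D}_i}(E_{i,j},E_{i,j}[s])$. \emph{Semiorthogonality}: for objects in distinct blocks, say with indices $i'>i$, one has $E_{i',j'}\in\mathcal{D}_{i'}$ and $E_{i,j}\in\mathcal{D}_i\subset\mathcal{D}_{i'}^{\perp}$, so all $\mathrm{Hom}(E_{i',j'},E_{i,j}[s])$ vanish by the definition of a semiorthogonal decomposition; for two objects in the same block the required vanishing is exactly the one holding inside $\mathcal{D}_i$, transported via the full embedding $\mathcal{D}_i\hookrightarrow D^b(X)$. \emph{Fullness}: since $\mathcal{D}_i$ is admissible it is thick, so the smallest full thick subcategory of $D^b(X)$ generated by $E_{i,1},\dots,E_{i,m_i}$ lies in $\mathcal{D}_i$ and agrees with the one generated inside $\mathcal{D}_i$, namely all of $\mathcal{D}_i$; hence the thick subcategory generated by $S$ contains every $\mathcal{D}_i$ and therefore equals $\langle\mathcal{D}_1,\dots,\mathcal{D}_r\rangle=D^b(X)$.

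Then I would apply this with $\mathcal{D}_i=\pi^*D^b(Z)\otimes\mathcal{E}_i$. By hypothesis the functor $\Phi_i:=\pi^*(-)\otimes\mathcal{E}_i$ is an equivalence from $D^b(Z)$ onto $\mathcal{D}_i$. An equivalence of triangulated categories preserves all $\mathrm{Hom}$-groups, the property of an object being exceptional, and the smallest full thick subcategory generated by a given set of objects; consequently $(\Phi_i\mathcal{A}_1,\dots,\Phi_i\mathcal{A}_m)=(\pi^*\mathcal{A}_1\otimes\mathcal{E}_i,\dots,\pi^*\mathcal{A}_m\otimes\mathcal{E}_i)$ is a full exceptional collection for $\mathcal{D}_i$, in the same order as $(\mathcal{A}_1,\dots,\mathcal{A}_m)$. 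Plugging these $r$ collections into the principle above produces exactly the ordered set $S$ in the statement, which is then a full exceptional collection for $X$.

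I do not expect a genuine obstacle: the lemma is formal once the semiorthogonal decomposition and the equivalences $\pi^*(-)\otimes\mathcal{E}_i$ are in hand. The only things demanding a little care are bookkeeping of the ordering convention — the $\mathcal{E}_i$-blocks must be stacked so that a larger $i$ comes later, consistently with $\mathcal{D}_i\subset\mathcal{D}_{i'}^{\perp}$ for $i'>i$ — and the harmless remark that ``thick subcategory generated inside $\mathcal{D}_i$'' coincides with ``generated inside $D^b(X)$'', which relies on admissible subcategories being thick. Neither smoothness and projectivity of $X$ and $Z$ nor flatness and properness of $\pi$ enter the argument beyond ensuring that all objects and functors in sight are well defined.
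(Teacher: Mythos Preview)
Your argument is correct and is essentially the same as the paper's, only more carefully organized: the paper verifies exceptionality via the equivalence $\pi^*(-)\otimes\mathcal{E}_i$, then appeals directly to the semiorthogonal decomposition for the inter-block vanishing and for generation, exactly as you do. Your additional remarks on thickness of admissible subcategories and on the ordering convention simply make explicit what the paper leaves implicit.
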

\begin{proof}
	Since $D^b(Z)$ is equivalent to $\pi^*D^b(Z)\otimes \mathcal{E}_i$ via the functor $\pi^*(-)\otimes \mathcal{E}_i$, we obtain $\mathrm{Hom}(\pi^*(\mathcal{A}_l)\otimes\mathcal{E}_i,\pi^*(\mathcal{A}_l)\otimes\mathcal{E}_i[p])\simeq \mathrm{Hom}(\mathcal{A}_l,\mathcal{A}_l[p])$. This implies that all the elements of $S$ are exceptional. From the assumption that $D^b(X)=\langle \pi^*D^b(Z)\otimes \mathcal{E}_1,...,\pi^*D^b(Z)\otimes \mathcal{E}_r\rangle$ is a semiorthogonal decomposition it is easy to conclude that $S$ generates $D^b(X)$ and that furthermore $\mathrm{Hom}(\pi^*(\mathcal{A}_l)\otimes\mathcal{E}_i,\pi^*(\mathcal{A}_q)\otimes\mathcal{E}_j[p])=0$ for all $p\in\mathbb{Z}$ whenever $i>j$.
\end{proof}
Denote by $\mathrm{NChow}(k)$ the category of noncommutative Chow motives (see \cite{TZ} for details). Now let $\mathrm{CSep}(k)$ be the full subcategory of $\mathrm{NChow}(k)$ consisting of objects of the form $U(A)$ with $A$ a commutative separable $k$-algebra. Analogously, $\mathrm{Sep}(k)$ denotes the full subcategory of $\mathrm{NChow}(k)$ consisting of objects $U(A)$ with $A$ a separable $k$-algebra. And finally, we write $\mathrm{CSA}(k)$ for the full subcategory of $\mathrm{Sep}(k)$ consisting of $U(A)$ with $A$ being a central simple $k$-algebra. 
\begin{proof} (of Theorem 1.2)\\
	Assume $\mathrm{rdim}(X)=0$. From \cite{ABZ}, Lemma 1.20 it follows 
	\begin{eqnarray*}
		D^b(X)=\langle D^b(K_1),...,D^b(K_s)\rangle, 
	\end{eqnarray*}
	where $K_1,...,K_r$ are \'etale $k$-algebras. From the assumption we then obtain
	\begin{eqnarray*}
		\langle \pi^*D^b(Z)\otimes \mathcal{E}_1,...,\pi^*D^b(Z)\otimes \mathcal{E}_r\rangle=\langle D^b(K_1),...,D^b(K_s)\rangle.
	\end{eqnarray*}
	Now \cite{KU1Z}, Proposition 4.10 provides us with pretriangulated dg categories $\mathcal{T}_1,...,\mathcal{T}_r$ such that $H^0(\mathcal{T}_i)\simeq \pi^*D^b(Z)\otimes \mathcal{E}_i$. As $D^b(Z)\simeq \pi^*D^b(Z)\otimes \mathcal{E}_i$ for $1\leq i\leq r$ and since the dg enhancement of $D^b(Z)$ is unique (see \cite{LUZ}) we get $U(\mathrm{perf}_{dg}(Z))\simeq U(\mathcal{T}_i)$ for all $1\leq i\leq r$. Applying noncommutative motives to the semiorthogonal decompositions gives
	\begin{eqnarray*}
		U(\mathrm{perf}_{dg}(Z))\oplus...\oplus U(\mathrm{perf}_{dg}(Z))\simeq U(K_1)\oplus...\oplus U(K_s).
	\end{eqnarray*}
	Assuming that $Z$ is either a Brauer--Severi variety over an arbitrary field $k$ or a generalized Brauer--Severi variety over a field of characteristic zero, we conclude from Theorem 4.1 that $U(\mathrm{perf}_{dg}(Z))= \bigoplus_j U(A_j)$ where $A_j$ are central simple $k$-algebras. Thus
	\begin{eqnarray*}
		\bigoplus_j U(A_j)^{\oplus r}\simeq U(K_1)\oplus...\oplus U(K_s).
	\end{eqnarray*}
	Recall from \cite{TAZ} that one has the following $2$-cartesian square of categories (see \cite{TAZ}, (2.16) and Corollary 2.13)
	\begin{displaymath}
	\begin{xy}
	\xymatrix{
		\{U(k)^{\oplus n}\mid n\geq 0\}\ar[r]^{} \ar[d]_{}    &   \mathrm{CSA}(k)^{\oplus}\ar[d]^{}                   \\
		\mathrm{CSep}(k) \ar[r]^{}             &   \mathrm{Sep}(k)             
	}
	\end{xy} 
	\end{displaymath}
	which gives an equivalence of categories $\{U(k)^{\oplus n}\mid n\geq 0\}\simeq\mathrm{CSA}(k)^{\oplus}\times_{\mathrm{Sep}(k)} \mathrm{CSep}(k)$. Here $\mathrm{CSA}(k)^{\oplus}$ denotes the closure of $\mathrm{CSA}(k)$ under finite direct sums.
	Now the above $2$-cartesian square, or more precise the universal property of fiber products, implies that such an isomorphism is possible if only if $K_1=...=K_s=k$ and all $A_j$ are split. On the other hand, if the 1-cocycle which determines $Z$ is trivial, $Z$ is a Grassmannian respectively a projective space. In both cases $Z$ admits a full exceptional collection and therefore $X$ has one. According to \cite{ABZ}, Lemma 1.20 one has $\mathrm{rdim}(X)=0$. Now assume the 1-cocycle which determines $Z$ is trivial. It is well known that $Z$ has a full exceptional collection. Then Lemma 6.1 provides us with a full exceptional collection for $X$. Again \cite{ABZ}, Lemma 1.20 immediately gives $\mathrm{rdim}(X)=0$.
	
	If $Z$ is a twisted quadric associated to $(A,\sigma)$ of degree $n$ with trivial discriminant
	we conclude from \cite{TA1Z}, Example 3.11 (see \cite{BLUZ} for a semiorthogonal decomposition of $D^b(Z)$) that the motive decomposes as 
	\begin{eqnarray*}
		U(\mathrm{perf}_{dg}(Z))\simeq \left(\bigoplus^{n-3}_{\substack{i\geq 0\\even}}U(k)\right)\oplus \left(\bigoplus^{n-3}_{\substack{i>0\\odd}}U(A)\right)\oplus U(C^{+}_0(A,\sigma))\oplus U(C^{-}_0(A,\sigma)).
	\end{eqnarray*}
	Here $k, A, C^{-}_0(A,\sigma)$ and $C^{+}_0(A,\sigma)$ are the minimal Tit's algebras of $Z$. In our case, the algebras $C^{-}_0(A,\sigma)$  and $C^{+}_0(A,\sigma)$ are central simple over $k$. Hence we get
	\begin{eqnarray*}
		U(\mathrm{perf}_{dg}(X))&\simeq&\Big(U(k)\oplus U(A)\oplus...\oplus U(C^{-}_0(A,\sigma))\oplus U(C^{+}_0(A,\sigma))\Big )^{\oplus r}\\
		&\simeq &U(K_1)\oplus...\oplus U(K_s).
	\end{eqnarray*}
	Note that this isomorphism follows also directly from \cite{TA1Z}, Example 3.11. Again the above $2$-cartesian square implies that $K_1=...=K_s=k$ and $A, C^{-}_0(A,\sigma)$ and $C^{+}_0(A,\sigma)$ are split. This implies that the 1-cocycle which determines $Z$ must be trivial.
	If $Z$ is twisted quadric associated to a central simple $\mathbb{R}$-algebra $(A,\sigma)$ with involution of orthogonal type, we refer to the proof of Theorem 5.6 in \cite{NO2Z} to conclude that $A$ must be split in this case.  
	Now assume the 1-cocycle which determines $Z$ is trivial. Then see \cite{KA2Z} or \cite{BLUZ} to conclude that $Z$ has a full exceptional collection. Then Lemma 6.1 provides us with a full exceptional collection for $X$. Again \cite{ABZ}, Lemma 1.20 immediately gives $\mathrm{rdim}(X)=0$.
\end{proof}
\begin{proof}(of Corollary 1.3)\\
	We assume $\mathrm{rdim}(X)=0$. Theorem 1.2 implies that the 1-cocycle which defines $Z$ must be trivial. Now see \cite{BLUZ} to conclude that $Z$ admits a full exceptional collection. Lemma 6.1 gives us a full exceptional collection for $X$. On the other hand, if $X$ admits a full exceptional collection then Lemma 1.20 of \cite{ABZ} immediately implies $\mathrm{rdim}(X)=0$.
\end{proof}
\begin{proof}(of Corollary 1.4)\\
	We prove the statement only for $\mathbb{P}_Z(\mathcal{E})$ as the other case can be shown in the same way. If $\mathbb{P}_Z(\mathcal{E})$ has a $k$-rational point, then so does $Z$. Now \cite{NO2Z}, Theorem 6.3, Proposition 6.8 and Proposition 6.10 imply that $Z$ admits a full exceptional collection. Lemma 6.1 provides us with a full exceptional collection for $X$ and Lemma 1.20 of \cite{ABZ} shows $\mathrm{rdim}(X)=0$.
	
	If $\mathrm{rdim}(X)=0$, Theorem 1.2 implies that the 1-cocycle defining $Z$ must be trivial. Hence $Z$ is a projective space or a smooth isotropic quadric and admits therefore a $k$-rational point $z_0\in Z$. Let $\pi^{-1}(z_0)\subset \mathbb{P}_Z(\mathcal{E})$ be the fiber. Note that $\pi^{-1}(z_0)\simeq \mathbb{P}_k^m$ where $m+1$ is the rank of $\mathcal{E}$. As $\mathbb{P}_k^m$ has a $k$-rational point, we also have one on $\mathbb{P}_Z(\mathcal{E})$. Now if $X$ is $k$-rational, $X$ admits a $k$-rational point. Therefore, $Z$ admits a $k$-rational point by Lang--Nishimura theorem. Now \cite{NO2Z}, Theorem 6.3, Proposition 6.8 and Proposition 6.10 imply that $Z$ admits a full exceptional collection. Lemma 6.1 provides us with a full exceptional collection for $X$ and Lemma 1.20 of \cite{ABZ} shows $\mathrm{rdim}(X)=0$. From the definition of $\mathrm{rcodim}(X)$ the assertion follows.
	This completes the proof.
\end{proof}

\begin{rema}
	\textnormal{The results in \cite{NO2Z} show that Theorems 1.1, 1.2 and thus Corollaries 1.3 and 1.4 also hold if $Z$ is the finite product of the considered varieties.}
\end{rema}
\section{Proof of Theorem 1.5}
\begin{proof}
	We have a semiorthogonal decomposition
	\begin{eqnarray*}
		D^b(X)=\langle \mathcal{A}_1,...,\mathcal{A}_r\rangle
	\end{eqnarray*}
	with $\mathcal{A}_i\simeq D^b(Z)$. For any of the varieties $Z$ from the assumption, we have $\mathrm{rdim}(Z)\leq \mathrm{ind}(A)-1$. Indeed, for Brauer--Severi varieties, see \cite{NO3Z}, Proposition 4.1, for generalized Brauer--Severi varieties see \cite{NO2Z}, Proposition 6.14 and for the twisted quadrics see \cite{NO2Z}, Theorem 6.16. Now the inequality $\mathrm{rdim}(Z)\leq \mathrm{ind}(A)-1$ and the fact that $D^b(Z)$ admits a semiorthogonal decomposition immediately implies
	\begin{eqnarray*}
		\mathrm{rdim}(X)\leq \mathrm{ind}(A)-1.
	\end{eqnarray*}
	For the second statement of our theorem, we first consider the split case $\mathrm{ind}(A)=1$. In this case, $D^b(Z)$ admits a full exceptional collection. Therefore, $D^b(X)$ admits a full exceptional collection (see \cite{OZ}). According to \cite{ABZ}, Lemma 1.20 we have $\mathrm{rdim}(X)=0$. Hence $\mathrm{rdim}(X)=\mathrm{ind}(A)-1$. Now let $\mathrm{ind}(A)=2$. Since $A$ is non-split in this case, we have $1\leq \mathrm{rdim}(X)$ according to Theorem 1.2 and $\mathrm{rdim}(X)\leq \mathrm{ind}(A)-1=1$ according to the first statement of the theorem. This implies $\mathrm{rdim}(X)=\mathrm{ind}(A)-1$. Finally, we consider the case $\mathrm{ind}(A)=3$. Again, since $A$ is non-split, we have $1\leq \mathrm{rdim}(X)$. We exclude the cases $\mathrm{rdim}(X)=0$ and $\mathrm{rdim}(X)=1$. From this, we get $\mathrm{rdim}(X)=2=\mathrm{ind}(A)-1$. Since $A$ is non-split, we immediately get $\mathrm{rdim}(X)\neq 0$. Assume $\mathrm{rdim}(X)=1$. From \cite{AB1Z}, Proposition 6.1.6 and 6.1.10 we conclude that if $\mathrm{rdim}(X)=1$ there must be a semiorthogonal decomposition of $D^b(X)$ whose components are either $D^b(K)$, where $K/k$ is a (finite) separable extension, or $D^b(D)$, where $D$ is a central division algebra with $\mathrm{ind}(D)\in\{1,2\}$, or $D^b(C)$, where $C$ is a smooth $k$-curve of positive genus. Note that $D^b(C)$ cannot be present because $K_0(X)$ is torsion free. Using the described semiorthogonal decomposition (that we get assuming $\mathrm{rdim}(X)=1$), we see that the noncommutative motive $U(\mathrm{perf}_{dg}(X))$ decomposes as
	\begin{eqnarray}
	U(\mathrm{perf}_{dg}(X))\simeq\bigoplus^n_{i=1}U(K_i)\oplus \left(\bigoplus^m_{j=1}U(D_j)\right)
	\end{eqnarray}
	for suitable central division algebras $D_j$ with $\mathrm{ind}(D_j)\in\{1,2\}$ and suitable separable extensions $K_i$.
	Note that $K_0(X)\simeq \mathbb{Z}^{\oplus h}$ and therefore $n+m=h$. After base change to $\bar{k}$ we also have $K_0(X_{\bar{k}})\simeq \mathbb{Z}^{\oplus h}$. 
	Now for any $D^b(K_i)$ we obtain after base change $D^b(K_i)_{\bar{k}}\simeq \prod^{r_i}_{q=1}D^b(\bar{k})$.
	Hence $\sum^n_{i=1}{r_i}+m=h$. But this implies $\sum^n_{i=1}{r_i}=n$ and therefore $r_i=1$. In the case where $Z$ is a (generalized) Brauer--Severi variety, the noncommutative motive $ U(\mathrm{perf}_{dg}(X))$ decomposes as 
	\begin{eqnarray}
	U(\mathrm{perf}_{dg}(X))\simeq\left(\bigoplus_{j}U(A^{\otimes q_j})\right)^{\oplus r}.
	\end{eqnarray}
	Note that there is a $j_0$ such that $q_{j_0}=1$. 
	The above isomorphisms (2) and (3) then give
	\begin{eqnarray*}
		\bigoplus^n_{i=1}U(k)\oplus \left(\bigoplus^m_{j=1}U(D_j)\right) \simeq \left(\bigoplus_{j}U(A^{\otimes q_j})\right)^{\oplus r}
	\end{eqnarray*} 
	Then by \cite{TAZ}, Theorem 2.19 we conclude that $A$ must be split or that $A$ must be Brauer-equivalent to $D_j$ for some $j$. This contradicts $\mathrm{ind}(A)=3$. 
	
	Now we consider the case where $Z$ is a twisted quadric. In this case the noncommutative motive decomposes as
	\begin{eqnarray*}
		U(\mathrm{perf}_{dg}(X))\simeq \left(\bigoplus_{s}U(k)\oplus \bigoplus_{t}U(A)\oplus U(C(A,\sigma))\right)^{\oplus r}, 
	\end{eqnarray*} 
	where $C(A,\sigma)$ denotes the Clifford algebra. Since we assumed $\mathrm{ind}(A)=3$, $C(A,\sigma)$ is the product of two central simple algebras $A_1$ and $A_2$. But then we have
	\begin{eqnarray*}
		\bigoplus^n_{i=1}U(k)\oplus \left(\bigoplus^m_{j=1}U(D_j)\right) \simeq \left(\bigoplus_{s}U(k)\oplus \bigoplus_{t}U(A)\oplus U(A_1)\oplus U(A_2))\right)^{\oplus r}
	\end{eqnarray*} 
	Then by \cite{TAZ}, Theorem 2.19 we conclude that $A$ must be split or that $A$ must be Brauer-equivalent to $D_j$ for some $j$. This contradicts $\mathrm{ind}(A)=3$. This completes the proof.
\end{proof}

{\small MATHEMATISCHES INSTITUT, HEINRICH--HEINE--UNIVERSIT\"AT 40225 D\"USSELDORF, GERMANY}\\
E-mail adress: novakovic@math.uni-duesseldorf.de

\end{document}